\documentclass{amsart}

\oddsidemargin 6pt \evensidemargin 6pt \marginparwidth 48pt
\marginparsep 10pt
\topmargin -18pt \headheight 12pt \headsep 25pt  \footskip 30pt
\textheight 625pt \textwidth 431pt \columnsep 10pt \columnseprule 0pt

\usepackage[all]{xy}

\theoremstyle{plain}
\newtheorem{thm}{Theorem}[section]
\newtheorem{prop}[thm]{Proposition}
\newtheorem{lemma}[thm]{Lemma}

\theoremstyle{definition}

\theoremstyle{remark}
\newtheorem{rem}[thm]{Remark}

\begin{document}

\title[Universal deformation rings related to $S_5$]{Universal deformation rings for the 
symmetric group $S_5$ and one of its double covers}

\author{Frauke M. Bleher}
\address{F.B.: Department of Mathematics\\University of Iowa\\
Iowa City, IA 52242-1419, U.S.A.}
\email{fbleher@math.uiowa.edu}
\thanks{The first author was supported in part by  
NSF Grant DMS06-51332  and NSA Grant H98230-06-1-0021.}
\author{Jennifer B. Froelich}
\address{J.F.: Department of Mathematics and Computer Science\\
Dickinson College\\ Carlisle, PA 17013, U.S.A.}
\email{froelicj@dickinson.edu}

\subjclass[2000]{Primary 20C20; Secondary 16G20}
\keywords{Universal deformation rings, dihedral defect groups, quaternion defect groups}

\begin{abstract}
Let $S_5$ denote the symmetric group on $5$ letters,
and let $\hat{S}_5$ denote a non-trivial double cover of $S_5$
whose  Sylow $2$-subgroups are generalized quaternion.  
We determine the universal deformation rings
$R(S_5,V)$ and $R(\hat{S}_5,V)$ of each mod $2$ representation $V$ of $S_5$
that belongs to the principal $2$-modular block of $S_5$ and
whose stable endomorphism ring is given by scalars when it is inflated to $\hat{S}_5$.
We show that for these $V$, a question raised by the first author and Chinburg concerning 
the relation of the universal deformation ring of $V$ to the Sylow $2$-subgroups of $S_5$
and $\hat{S}_5$, respectively, has an affirmative answer. 
\end{abstract}

\maketitle


\section{Introduction}
\label{s:intro}

Let $k$ be an algebraically closed field of characteristic $p>0$ and let $W=W(k)$ be the ring of infinite 
Witt vectors over $k$. Let $G$ be a finite group, and suppose $V$ is a finitely generated $kG$-module.
It was proved in \cite{bc} that if the stable endomorphism ring $\underline{\mathrm{End}}_{kG}(V)$ is 
one-dimensional over $k$ then $V$ has a universal deformation ring $R(G,V)$. The ring $R(G,V)$ is 
universal with respect to deformations of $V$ over complete local commutative Noetherian rings with 
residue field $k$ (for details, see \S\ref{s:prelim}). 
In \cite{bl,diloc,3sim,bc,bc5,llosent}, the isomorphism types of $R(G,V)$ have been determined for $V$ 
belonging to cyclic blocks, respectively to various tame blocks with dihedral defect groups.
In the present paper, we will consider the principal $2$-modular blocks of the symmetric group
$S_5$ and one of its double covers $\hat{S}_5$ whose Sylow $2$-subgroups are generalized
quaternion. One of the main goals is to investigate how the universal deformation rings change
when inflating modules from $S_5$ to $\hat{S}_5$.
The key tools used to determine 
the universal deformation rings in all the above cases
have been results from modular and ordinary representation theory due to 
Brauer, Erdmann \cite{erd}, Linckelmann \cite{linckel1}, Carlson-Th\'{e}venaz \cite{carl2}, 
and others.

The main motivation for studying universal deformation rings for finite groups is that this case helps
one understand ring theoretic properties of universal deformation rings for profinite groups $\Gamma$.
The latter have become an important tool in number theory, in particular if $\Gamma$ is a
profinite Galois group (see e.g. \cite{cornell} and its references).
In \cite{lendesmit}, de Smit and Lenstra showed
that if $\Gamma$ is an arbitrary profinite group and $V$ is a finite
dimensional vector space over $k$ with a continuous $\Gamma$-action which has a universal
deformation ring  $R(\Gamma,V)$, then $R(\Gamma,V)$ is the inverse limit of the universal 
deformation rings $R(G,V)$ when $G$ ranges over all finite discrete quotients of $\Gamma$ through 
which the $\Gamma$-action on $V$ factors. Thus to answer questions about the ring structure of 
$R(\Gamma,V)$, it is natural to first consider the case when $\Gamma=G$ is finite.

Suppose now that the characteristic of $k$ is $2$ and that 
$S_5$ and $\hat{S}_5$ are as above. The Sylow $2$-subgroups of $S_5$ are dihedral
groups of order $8$, whereas the Sylow $2$-subgroups of $\hat{S}_5$ are generalized
quaternion groups of order $16$. The center $Z$ of $\hat{S}_5$ has 2 elements and
$\hat{S}_5/Z\cong S_5$.  Since $Z$ acts
trivially on the simple $k\hat{S}_5$-modules, they are all inflated from simple $kS_5$-modules.
Moreover, the simple modules belonging to the principal block of $k\hat{S}_5$ are inflated from the
simple $kS_5$-modules belonging to the principal block of $kS_5$. 
There are precisely two isomorphism classes of simple $kS_5$-modules belonging to the 
principal block of $kS_5$. They are represented by the trivial simple module $T_0$ and a 
$4$-dimensional simple module $T_1$.

Our main result is as follows, where $W[\mathbb{Z}/2]$ denotes the group ring over $W$ of the
cyclic group $\mathbb{Z}/2$.

\begin{thm}
\label{thm:supermain}
Let $B$ $($resp. $\hat{B}$$)$ be the principal block of $kS_5$ $($resp. $k\hat{S}_5$$)$.
Let $V$ be an indecomposable $kS_5$-module belonging to $B$, and denote its inflation to 
$k\hat{S}_5$ also by $V$, so $V$ belongs to both $B$ and $\hat{B}$.
\begin{enumerate}
\item[(a)] Then
$\underline{\mathrm{End}}_{k\hat{S}_5}(V) \cong k$ if and only if  $\mathrm{End}_{kS_5}(V)\cong k$. 
Moreover, we have $\mathrm{End}_{kS_5}(V)\cong k$
if and only if $V$ is either isomorphic to $T_0$ or a uniserial $kS_5$-module whose
radical series length is at most $3$ and which is a submodule or a quotient module of the projective
$kS_5$-cover of $T_1$.
\vspace{1ex}
\item[(b)] Suppose $\mathrm{End}_{kS_5}(V)\cong k$.
\begin{enumerate}
\item[(i)] If $V\cong T_0$, then $R(S_5,V)\cong W[\mathbb{Z}/2]\cong R(\hat{S}_5,V)$.
\item[(ii)] If $V\cong T_1$, then $R(S_5,V)\cong k$ and $R(\hat{S}_5,V)\cong W$.
\item[(iii)] If the radical series length of $V$ is $2$, then $R(S_5,V)\cong W[\mathbb{Z}/2]\cong
R(\hat{S}_5,V)$.
\item[(iv)] If  the radical series length of $V$ is $3$, then $R(S_5,V)\cong W[[t]]/(t^2,2t)$ and
$R(\hat{S}_5,V)\cong W[[t]]/(t^3-2t)$.
\end{enumerate}
\end{enumerate}
\end{thm}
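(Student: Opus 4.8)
The plan is to reduce the theorem to the representation theory of the two tame blocks $B$ and $\hat B$ and then to carry out the deformation-theoretic computations case by case. The first step is to make the blocks explicit. The Sylow $2$-subgroups of $S_5$ are dihedral of order $8$ and those of $\hat S_5$ are generalized quaternion of order $16$, so by Erdmann's classification of tame blocks $B$ is Morita equivalent to one of the dihedral algebras with two simple modules and $\hat B$ to one of the quaternion algebras with two simple modules. Using the $2$-modular decomposition matrices of $S_5$ and $\hat S_5$ one computes the Cartan matrices, identifies the exact Morita type, writes down the Gabriel quiver and relations, and reads off the Loewy structure of the projective covers $P(T_0)$ and $P(T_1)$ over $kS_5$ and their analogues over $k\hat S_5$. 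From this one obtains the explicit finite list of uniserial submodules and quotient modules of $P(T_1)$ of radical series length at most $3$, together with their endomorphism rings.

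For part (a) the key observation is that for any $kS_5$-module $V$, inflated to $k\hat S_5$, one has $\mathrm{Hom}_{kS_5}(V,V)=\mathrm{Hom}_{k\hat S_5}(V,V)$, hence $\mathrm{End}_{kS_5}(V)=\mathrm{End}_{k\hat S_5}(V)$; moreover $V$ is never projective over $k\hat S_5$, because the central involution $z$ acts trivially on $V$; and if a $k\hat S_5$-homomorphism $V\to V$ factors through a projective $k\hat S_5$-module $Q$, then, since $z$ acts trivially on $V$, it factors through the subspace $Q^{Z}$ of $Z$-fixed points, which is projective over $k\hat S_5/Z=kS_5$ because $Q$ is free over $kZ$. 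These facts give natural surjections
\[
\mathrm{End}_{kS_5}(V)\longrightarrow\underline{\mathrm{End}}_{k\hat S_5}(V)\longrightarrow\underline{\mathrm{End}}_{kS_5}(V),
\]
so that $\underline{\mathrm{End}}_{k\hat S_5}(V)\cong k$ as soon as $\mathrm{End}_{kS_5}(V)\cong k$. It then remains, on the one hand, to identify the indecomposable modules $V$ in $B$ with $\mathrm{End}_{kS_5}(V)\cong k$ by going through the list of string and band modules (and the two projective modules), checking that these are exactly $T_0$, $T_1$ and the uniserial sub-/quotient modules of $P(T_1)$ of radical series length $\le 3$; and, on the other hand, to show that every other indecomposable $V$ in $B$ has $\underline{\mathrm{End}}_{k\hat S_5}(V)\not\cong k$. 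For the latter, when $\underline{\mathrm{End}}_{kS_5}(V)\not\cong k$ this is immediate from the surjection above, and the finitely many remaining $V$ (those with $\underline{\mathrm{End}}_{kS_5}(V)\cong k$ but $\mathrm{End}_{kS_5}(V)\not\cong k$) are handled by producing, from the Loewy structure of the quaternion block $\hat B$, a non-scalar self-map of $V$ that does not factor through any projective $k\hat S_5$-module.

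For part (b) the tangent space of the deformation functor of $V$ is $\mathrm{Ext}^1_{kG}(V,V)$, which one reads off from the quiver and relations. In case (i), $V\cong T_0$ is one-dimensional, so a lift over an Artinian $W$-algebra $A$ is simply a group homomorphism $G\to 1+\mathfrak m_A$; it factors through the maximal pro-$2$ quotient of $G^{\mathrm{ab}}$, which is $\mathbb Z/2$ for $S_5$ and also for $\hat S_5$ (since $[\hat S_5,\hat S_5]$ contains the perfect group $\hat A_5$), whence $R(S_5,T_0)\cong W[\mathbb Z/2]\cong R(\hat S_5,T_0)$. In case (ii), the quiver has no loop at the vertex of $T_1$, so $\mathrm{Ext}^1_{kS_5}(T_1,T_1)=0$ and $R(S_5,T_1)$ is a quotient of $W$; since the Brauer character of $T_1$ is not the restriction to $2$-regular classes of an ordinary character of $S_5$, the module $T_1$ does not lift to $W$, and a computation of the obstruction class in $\mathrm{Ext}^2_{kS_5}(T_1,T_1)$ shows that it does not even lift to $W/4$, forcing $R(S_5,T_1)\cong k$; over $\hat S_5$ one still has $\mathrm{Ext}^1_{k\hat S_5}(T_1,T_1)=0$, while the $4$-dimensional faithful (spin) representation of $\hat S_5$ provides a lift of $T_1$ to $W$, so $R(\hat S_5,T_1)\cong W$. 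In cases (iii) and (iv) one computes $\mathrm{Ext}^1_{kG}(V,V)\cong k$, so $R(G,V)$ is a quotient of $W[[t]]$; one then constructs explicit lifts of $V$ — over $W[\mathbb Z/2]$ in the radical length $2$ cases and over $W[[t]]/(t^2,2t)$, respectively $W[[t]]/(t^3-2t)$, in the radical length $3$ cases — by deforming the matrices describing $V$ as a representation of the quiver subject to the relations of the $W$-order lifting the block algebra, and one identifies $R(G,V)$ with the stated ring by bounding it above via the obstruction map into $\mathrm{Ext}^2_{kG}(V,V)$ and below by the explicit deformation, using in addition the surjection $R(\hat S_5,V)\longrightarrow R(S_5,V)$ induced by inflation (surjective because inflation is injective on $H^1$).

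I expect the main obstacle to be the precise identification of the deformation rings in cases (ii) and especially (iv): showing that $R(S_5,T_1)$ is exactly $k$ and not some $W/2^m$, and that the radical length $3$ modules give exactly $W[[t]]/(t^2,2t)$ over $S_5$ but $W[[t]]/(t^3-2t)$ over $\hat S_5$, requires tracking the obstruction classes to all orders and understanding precisely how the defining relation of the block algebra interacts with the prime $2$. The fact that these two rings differ — the first is not flat over $W$ while the second is — is exactly the change in the deformation theory produced by passing from $S_5$ to its double cover $\hat S_5$, and this is the technical heart of the argument.
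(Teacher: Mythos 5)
Your overall strategy — reduce to the explicit tame algebras $\Lambda$ and $\hat\Lambda$ via Erdmann's classification, read off $\mathrm{Ext}^1$ from the quiver, and pin down the deformation rings by combining tangent/obstruction data with explicit lifts and the inflation-induced surjection $R(\hat S_5,V)\twoheadrightarrow R(S_5,V)$ — matches the paper's framework. But there are concrete gaps in both parts.

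\textbf{Part (a).} The paper does not merely get a chain of surjections; it proves the exact isomorphism $\underline{\mathrm{End}}_{k\hat S_5}(V)\cong\mathrm{End}_{kS_5}(V)$. The mechanism is Higman's criterion: endomorphisms of $V$ factoring through projective $k\hat S_5$-modules are exactly those in the image of the transfer $\mathrm{Tr}_1^{\hat S_5}$, and since $Z$ acts trivially on $V$ one has $\mathrm{Tr}_1^Z=2\cdot\mathrm{id}=0$, hence $\mathrm{Tr}_1^{\hat S_5}=\mathrm{Tr}_Z^{\hat S_5}\circ\mathrm{Tr}_1^Z=0$. This kills the ideal of projectively factoring endomorphisms over $k\hat S_5$ outright. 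Your argument only establishes that this ideal is contained in the corresponding ideal over $kS_5$, giving surjections $\mathrm{End}_{kS_5}(V)\twoheadrightarrow\underline{\mathrm{End}}_{k\hat S_5}(V)\twoheadrightarrow\underline{\mathrm{End}}_{kS_5}(V)$, which is enough for the ``if'' direction but leaves the ``only if'' direction dangling. The fallback you propose — checking the ``finitely many remaining $V$'' with $\underline{\mathrm{End}}_{kS_5}(V)\cong k$ but $\mathrm{End}_{kS_5}(V)\not\cong k$ — is not justified: $B$ has infinitely many indecomposables and you give no argument that only finitely many fall in this class. (In fact there are none, but that is exactly what the trace argument shows and what you haven't established.) Strengthening your $Q^Z$ observation to the transfer being identically zero would close this gap.

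\textbf{Part (b), cases (iii) and (iv).} The proposal to ``deform the matrices describing $V$ as a representation of the quiver subject to the relations of the $W$-order lifting the block'' and to ``track obstruction classes to all orders'' identifies the difficulty without supplying a method. The paper's actual arguments are quite different and more specific. For (iii) over $S_5$, the upper bound $W[\mathbb{Z}/2]\twoheadrightarrow R(S_5,V)$ comes from restriction to the cyclic subgroup $C=\langle(1,2)\rangle$, using that $\mathrm{Res}^{S_5}_C V\cong k\oplus(kC)^2$, and the lower bound from the two ordinary characters $\chi_3,\chi_4$ providing two distinct $W$-points; the $\hat S_5$-case then follows by showing a putative lift over $k[t]/(t^3)$ would be inflated from a nonexistent $S_5$-lift and appealing to $W$-freeness of $R(S_5,V)$. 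For (iv) over $S_5$, the paper invokes the structural result from \cite{bc5} that $R(S_5,V)\cong W[[t]]/(t^2-2\lambda t,\mu 2^m t)$ and determines $\mu=m=1$ by using $R(S_5,X)\cong k$ for $X=\Omega^{\pm1}(T_1)$. For (iv) over $\hat S_5$, Lemma \ref{lem:ext2doubles5} gives $\mathrm{Ext}^2_{k\hat S_5}(V,V)\cong k$ and hence $R\cong W[[t]]/(f)$ with $f$ monic cubic (Weierstrass preparation), and the precise identification $f=t^3-2t$ comes from computing the action of the central class sum $K_u$ (for $u$ of order $8$) on the lift $Z^W$ of $Y$, reading off the element $(0,5\sqrt 2)\in F\times F(\sqrt 2)$ from the ordinary character table and recognizing the $W$-subalgebra it generates as $W[[t]]/(t^3-2t)$. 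None of this is present in the proposal, and without it the ``explicit lift over $W[[t]]/(t^3-2t)$'' you postulate has no construction behind it.

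Cases (i) and (ii) are fine and essentially match the paper, except that for (ii) you propose computing the obstruction in $\mathrm{Ext}^2_{kS_5}(T_1,T_1)$ directly to rule out lifts to $W/4$, whereas the paper cites \cite[Prop.~2.1.3]{3sim} and \cite[Lemma~3.5(c)]{bl}; your route would need to be carried out explicitly to count as a proof.
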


In particular, the universal deformation rings $R(\hat{S}_5,V)$ are all complete 
intersection rings, whereas for $V$ as in part (iv), $R(S_5,V)$ is not a complete intersection. 
Note that for all cases (i)-(iv),
$R(S_5,V)$ (resp. $R(\hat{S}_5,V)$) is isomorphic to a subquotient ring of $WD_8$ (resp. 
$WQ_{16}$) when $D_8$ is a dihedral group of order $8$ (resp. $Q_{16}$ is a generalized 
quaternion group of order $16$).
In particular, this gives a positive answer in these cases  to a 
question raised by the first author and Chinburg in \cite[Question 1.1]{bc} whether the universal 
deformation ring of a representation of a finite group whose stable endomorphism ring is isomorphic to
$k$ is always isomorphic to a subquotient ring of the group ring over $W$ of a defect group of the 
modular block  associated to the representation.

The paper is organized as follows. In \S \ref{s:prelim}, we give some background on universal deformation rings. In \S \ref{s:ks5}, we state properties of the principal $2$-modular
block $B$ (resp. $\hat{B}$) of $S_5$ (resp. $\hat{S}_5$) 
and prove part (a) of Theorem \ref{thm:supermain}.
In \S \ref{s:udr}, we determine the universal deformation rings
of the $B$-modules whose endomorphism rings are isomorphic to $k$ and 
of their inflations to $\hat{B}$ and prove part (b) of Theorem \ref{thm:supermain}.
In \S\ref{s:append}, we list the ordinary and the $2$-modular character table of $\hat{S}_5$.

This paper is based on the Ph.D. thesis of the second author under the supervision
of the first author \cite{froelich}. We would like to thank the referee for helpful comments.


\section{Preliminaries}
\label{s:prelim}

Let $k$ be an algebraically closed field of characteristic $p>0$, let $W$ be the ring of infinite Witt 
vectors over $k$ and let $F$ be the fraction field of $W$. Let ${\mathcal{C}}$ be the category of 
all complete local commutative Noetherian rings with residue field $k$. The morphisms in 
${\mathcal{C}}$ are continuous $W$-algebra homomorphisms which induce the identity map on $k$. 

Suppose $G$ is a finite group and $V$ is a finitely generated $kG$-module. 
A lift of $V$ over an object $R$ in ${\mathcal{C}}$ is a finitely generated $RG$-module $M$ which
is free over $R$ together with a $kG$-module isomorphsim $\phi:k\otimes_R M\to V$. Two lifts 
$(M,\phi)$ and $(M',\phi')$ of $V$ over $R$ are isomorphic if there is an $RG$-module isomorphism 
$f:M\to M'$ such that $\phi'\circ(k\otimes_R f) = \phi$.
The isomorphism class of a lift of $V$ 
over $R$ is called a deformation of $V$ over $R$, and the set of such deformations is denoted by 
$\mathrm{Def}_G(V,R)$. The deformation functor ${F}_V:{\mathcal{C}} \to \mathrm{Sets}$
is defined to be the covariant functor which sends an object $R$ in ${\mathcal{C}}$ to 
$\mathrm{Def}_G(V,R)$.

If there exists an object $R(G,V)$ in ${\mathcal{C}}$ and a lift $(U(G,V),\phi_U)$ of $V$ over 
$R(G,V)$ 
such that for each $R$ in ${\mathcal{C}}$ and for each lift $(M,\phi)$ of $V$ over $R$ there is a unique 
morphism $\alpha:R(G,V)\to R$ in ${\mathcal{C}}$ such that 
$(M,\phi)$ is isomorphic to $(R\otimes_{R(G,V),\alpha}U(G,V),\phi_U)$, 
then $R(G,V)$ is called the universal deformation ring of $V$ and the isomorphism class
of the lift $(U(G,V),\phi_U)$ is called the universal deformation of $V$. In other words, $R(G,V)$ represents
the functor ${F}_V$ in the sense that ${F}_V$ is naturally isomorphic to 
$\mathrm{Hom}_{{\mathcal{C}}}(R(G,V),-)$.  For more information on deformation rings see 
\cite{lendesmit} and \cite{maz1}.

Suppose $V$ has a universal deformation ring $R(G,V)$ and a universal lift $(U(G,V),\phi_U)$
over $R(G,V)$ that represents the universal deformation of $V$. 
Then we call $\overline{R}=R(G,V)/p\,R(G,V)$  the universal mod $p$ deformation ring
of $V$ and we call the isomorphism class of the lift $(\overline{R}\otimes_{R(G,V)}U(G,V),\phi_U)$ the 
universal mod $p$ deformation of $V$. Note that $\overline{R}$ represents the restriction
of the deformation functor $F_V$ to the full subcategory of $\mathcal{C}$ of objects that are
$k$-algebras.

The following two results were proved in \cite{bc}.
Here $\Omega$ denotes the syzygy, or Heller, operator for $kG$ (see for example \cite[\S 20]{alp}).

\begin{prop}
\label{prop:stablend}
{\rm (\cite[Prop. 2.1]{bc}).}
Suppose $V$ is a finitely generated $kG$-module whose stable endomorphism ring 
$\underline{\mathrm{End}}_{kG}(V)$ is isomorphic to $k$.  Then $V$ has  a universal 
deformation ring $R(G,V)$.
\end{prop}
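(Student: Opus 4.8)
The plan is to prove representability of the deformation functor $F_V$ of Section~\ref{s:prelim} by verifying the criteria of Schlessinger, as refined by Mazur for deformation functors of modules. First I would record the formal properties of $F_V$. Since $kG$ is a finite-dimensional $k$-algebra, the tangent space $F_V(k[\epsilon]/(\epsilon^2))$ is naturally identified with $\mathrm{Ext}^1_{kG}(V,V)$, which is finite-dimensional over $k$; and because a lift of $V$ over $R$ is by definition free as an $R$-module, base change along a surjection in $\mathcal{C}$ is exact on lifts and preserves $R$-freeness. These two facts yield Schlessinger's conditions (H1), (H2) and (H3) for $F_V$ by the standard arguments, so $F_V$ admits a versal hull; this hull will be the candidate ring $R(G,V)$.

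To upgrade the hull to a universal object one must verify the remaining condition (H4), which in Mazur's analysis reduces to the following \emph{automorphism-lifting property}: for every small extension $\pi\colon R'\to R$ in $\mathcal{C}$ and every lift $M$ of $V$ over $R'$, each $R G$-module automorphism of $M\otimes_{R'}R$ which reduces to $\mathrm{id}_V$ modulo the maximal ideal must extend to an $R'G$-module automorphism of $M$ reducing to $\mathrm{id}_V$. Granting this, one checks that the set of isomorphism classes of lifts over a fibre product $R'\times_R R''$ obtained by gluing fixed lifts $M'$ over $R'$ and $M''$ over $R''$ (with a chosen identification of their reductions over $R$) forms a torsor under the cokernel of
\[
\mathrm{Aut}^1_{R'G}(M')\times \mathrm{Aut}^1_{R''G}(M'') \longrightarrow \mathrm{Aut}^1_{RG}\bigl(M'\otimes_{R'}R\bigr),
\]
where $\mathrm{Aut}^1$ denotes automorphisms reducing to $\mathrm{id}_V$. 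The automorphism-lifting property makes this cokernel trivial, so $F_V$ carries the fibre-product square to a cartesian square of sets, which is (H4); thus $F_V$ is represented by $R(G,V)$, and the versal hull is the universal deformation ring.

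The heart of the argument, and the step where the hypothesis $\underline{\mathrm{End}}_{kG}(V)\cong k$ is essential, is the automorphism-lifting property. I would first reduce to the case that $V$ has no nonzero projective direct summand: writing $V=V_0\oplus P$ with $P$ the maximal projective summand, one has $\mathrm{Ext}^1_{kG}(V,V)=\mathrm{Ext}^1_{kG}(V_0,V_0)$ and $\underline{\mathrm{End}}_{kG}(V)=\underline{\mathrm{End}}_{kG}(V_0)$ since $kG$ is self-injective, so it suffices to treat $V_0$. For such a $V_0$, the two-sided ideal $\mathrm{PHom}_{kG}(V_0,V_0)$ of endomorphisms factoring through a projective module contains no nonzero idempotent (an idempotent there would split off a projective summand), hence is nilpotent and lies in the radical of $\mathrm{End}_{kG}(V_0)$; combined with $\mathrm{End}_{kG}(V_0)/\mathrm{PHom}_{kG}(V_0,V_0)=\underline{\mathrm{End}}_{kG}(V_0)\cong k$ this shows $\mathrm{End}_{kG}(V_0)$ is local with residue field $k$. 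Now given an automorphism $\psi=\mathrm{id}+\phi$ of $M\otimes_{R'}R$ reducing to $\mathrm{id}_{V_0}$, I would lift $\phi$ first to an $R'$-linear endomorphism of $M$ (possible since $M$ is $R'$-free), measure its failure to be $R'G$-linear by a cocycle valued in $\mathrm{Hom}_{kG}(V_0,V_0)\otimes_k\ker\pi$, then kill this obstruction after correcting by a term from $\mathrm{PHom}_{kG}(V_0,V_0)\otimes_k\ker\pi$ — such corrections being unobstructed because a homomorphism factoring through a projective always lifts through $\pi$ — reducing to an obstruction in $\underline{\mathrm{End}}_{kG}(V_0)\otimes_k\ker\pi\cong\ker\pi$, which is represented by a scalar and therefore lifts trivially; a small adjustment then makes the lift an automorphism. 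I expect the delicate part to be the bookkeeping with the exact sequence relating $\mathrm{Hom}$, $\mathrm{PHom}$ and $\underline{\mathrm{Hom}}$ and the verification that the projective-factoring part of an automorphism lifts; the rest is the formal deformation-theoretic machinery.
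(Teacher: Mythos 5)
The paper offers no proof here; it is a verbatim citation of \cite[Prop.\ 2.1]{bc}. So I'll assess your reconstruction directly.

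Your overall strategy is the right one and matches the Bleher--Chinburg argument in structure: verify Schlessinger's (H1)--(H3) to get a versal hull, reduce (H4) to the surjectivity of $\mathrm{Aut}^1_{R'G}(M)\to\mathrm{Aut}^1_{RG}(M\otimes_{R'}R)$ for small surjections $R'\to R$, strip off the maximal projective summand, and exploit that $\mathrm{PHom}_{kG}(V_0,V_0)$ is a nilpotent ideal of $\mathrm{End}_{kG}(V_0)$ with quotient $\underline{\mathrm{End}}_{kG}(V_0)\cong k$, so that $\mathrm{End}_{kG}(V_0)=k\cdot\mathrm{id}\oplus\mathrm{PHom}_{kG}(V_0,V_0)$ is local. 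All of that is correct.

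The genuine gap is in the final lifting step, where the bookkeeping is off in a way that matters. If $\tilde\phi$ is an $R'$-linear (not $R'G$-linear) lift of $\phi$, the map $g\mapsto\tilde\phi g-g\tilde\phi$ takes values in $\mathrm{Hom}_k(V_0,V_0)\otimes_k\ker\pi$ (adjoint representation), \emph{not} in $\mathrm{Hom}_{kG}(V_0,V_0)\otimes_k\ker\pi$: the individual values are not $G$-equivariant, that is the whole point. The obstruction to replacing $\tilde\phi$ by a $G$-equivariant lift is the class of this cocycle in $H^1(G,\mathrm{Hom}_k(V_0,V_0))\otimes_k\ker\pi\cong\mathrm{Ext}^1_{kG}(V_0,V_0)\otimes_k\ker\pi$, not an element of $\underline{\mathrm{End}}_{kG}(V_0)\otimes_k\ker\pi$. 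So the proposed ``reduce to an obstruction in $\underline{\mathrm{End}}_{kG}(V_0)\otimes\ker\pi$, which is a scalar'' does not parse, and as written your argument never actually shows the $\mathrm{Ext}^1$-class vanishes; indeed it need not vanish for an arbitrary $G$-equivariant $\phi$, so something about the special shape of $\phi=\alpha-\mathrm{id}$ must be used.

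The repair is to bypass cocycles entirely and work directly with $\phi\in\mathrm{End}_{RG}(N)$ satisfying $\phi(N)\subset\mathfrak m_R N$, where $N=M\otimes_{R'}R$. Filter by $\mathfrak m_R$-adic pieces: on each graded piece $\mathfrak m_R^iN/\mathfrak m_R^{i+1}N\cong V_0\otimes_k(\mathfrak m_R^i/\mathfrak m_R^{i+1})$, the induced map lies in $\mathrm{End}_{kG}(V_0)\otimes(\mathfrak m_R^i/\mathfrak m_R^{i+1})=\bigl(k\cdot\mathrm{id}\oplus\mathrm{PHom}_{kG}(V_0,V_0)\bigr)\otimes(\mathfrak m_R^i/\mathfrak m_R^{i+1})$. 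Lifting the scalar component into $\mathfrak m_R^i$ and the $\mathrm{PHom}$ component into maps $N\to N$ that factor through a projective $RG$-module and land in $\mathfrak m_R^iN$ (possible because $\mathrm{Ext}^1_{RG}(N,\text{projective})=0$), one writes, by descending induction, $\phi=\lambda\cdot\mathrm{id}_N+\pi$ with $\lambda\in\mathfrak m_R$ and $\pi$ factoring through a projective $RG$-module. Now both summands lift across $R'\to R$ canonically: scalars lift to scalars, and a factorization $N\to P_R\to N$ through a projective lifts to $M\to P_{R'}\to M$ since $P_{R'}$ is $R'G$-projective and $\mathrm{Ext}^1_{R'G}(M,\ker\pi\cdot P_{R'})=\mathrm{Ext}^1_{kG}(V_0,\text{projective})\otimes\ker\pi=0$. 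This yields the needed automorphism lifting with no cohomology class left to analyze, and it is where $\underline{\mathrm{End}}_{kG}(V_0)\cong k$ enters.
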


\begin{lemma} 
\label{lem:defhelp}
{\rm (\cite[Cor. 2.5]{bc}).}
Let $V$ be a finitely generated $kG$-module with $\underline{\mathrm{End}}_{kG}(V)\cong k$.
Then $\underline{\mathrm{End}}_{kG}(\Omega(V))\cong k$, and $R(G,V)$ and $R(G,\Omega(V))$ 
are isomorphic.
\end{lemma}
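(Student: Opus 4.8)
The plan is to treat the two assertions in turn, the statement about stable endomorphism rings being essentially formal and the statement about deformation rings requiring a comparison of the deformation functors $F_V$ and $F_{\Omega V}$. For the first assertion I would use that $kG$ is self-injective, so that the syzygy operator $\Omega$ is a self-equivalence of the stable module category $\underline{\mathrm{mod}}\,kG$, with quasi-inverse the cosyzygy operator $\Omega^{-1}$. Any such equivalence induces a $k$-algebra isomorphism $\underline{\mathrm{End}}_{kG}(V)\cong\underline{\mathrm{End}}_{kG}(\Omega V)$, so $\underline{\mathrm{End}}_{kG}(V)\cong k$ forces $\underline{\mathrm{End}}_{kG}(\Omega V)\cong k$; by Proposition \ref{prop:stablend} both $V$ and $\Omega V$ then have universal deformation rings, so it makes sense to compare $R(G,V)$ and $R(G,\Omega V)$.

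For the isomorphism $R(G,V)\cong R(G,\Omega V)$, I would construct mutually inverse natural transformations between $F_V$ and $F_{\Omega V}$ and then invoke the Yoneda lemma, since these functors are represented by $R(G,V)$ and $R(G,\Omega V)$. To pass from $F_V$ to $F_{\Omega V}$, fix $R$ in $\mathcal{C}$ and a lift $(M,\phi)$ of $V$ over $R$. Because $R$ is complete local, $RG$ is semiperfect, so $M$ admits a projective $RG$-cover $0\to\Omega_R M\to P\to M\to 0$; since $M$ is $R$-free this sequence is $R$-split, whence $\Omega_R M$ is $R$-free, and since reduction along $R\to k$ takes a projective $RG$-cover of $M$ to a projective $kG$-cover of $k\otimes_R M\cong V$, one obtains a canonical isomorphism $k\otimes_R\Omega_R M\cong\Omega V$. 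I would check that the resulting deformation of $\Omega V$ is independent of the choice of projective cover (uniqueness of projective covers) and of the representative of the deformation class of $V$, and that the assignment commutes with morphisms in $\mathcal{C}$. The transformation in the other direction is obtained in the same way from injective hulls, equivalently from cosyzygies over the self-injective ring $RG$, and the two transformations are mutually inverse because $\Omega_R$ and $\Omega_R^{-1}$ are inverse to one another up to projective summands, which vanish after reduction to $k$.

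The main obstacle I expect is the base-change bookkeeping: one must verify carefully that forming a projective $RG$-cover is compatible with $\otimes_R k$ — so that $\Omega_R M$ reduces exactly to $\Omega V$ and not merely to $\Omega V$ together with a spurious projective summand — and, more generally, compatible with an arbitrary morphism $R\to R'$ in $\mathcal{C}$, which is precisely what makes $F_V\to F_{\Omega V}$ a natural transformation. This rests on $RG$ being semiperfect and on $J(RG)$ reducing to $J(kG)$ modulo $\mathfrak{m}_R RG$; once this is in place, the remaining verifications are routine.
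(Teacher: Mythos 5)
Your proposal matches the standard proof of \cite[Cor.\ 2.5]{bc}, which the paper quotes without reproving: one shows that the Heller operator can be applied to lifts over any $R$ in $\mathcal{C}$ and yields a natural isomorphism $F_V\cong F_{\Omega V}$, then invokes representability. The first half (stable $\mathrm{End}$ is preserved because $\Omega$ is a stable self-equivalence of $\underline{\mathrm{mod}}\,kG$, with existence of both universal rings coming from Proposition~\ref{prop:stablend}) is exactly right, and the core of the second half — $RG$ is semiperfect for $R$ complete local Noetherian, so minimal projective covers exist; the cover is $R$-split because $M$ is $R$-free, so $\Omega_R M$ is again $R$-free; $J(RG)$ is the full preimage of $J(kG)$ since $\mathfrak m_R RG\subseteq J(RG)$, so reduction mod $\mathfrak m_R$ carries a minimal projective $RG$-cover to a minimal projective $kG$-cover and hence $k\otimes_R\Omega_R M\cong\Omega V$ canonically; the same holds after base change along any $R\to R'$ in $\mathcal C$ — is the whole content.

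Two small points deserve tightening. First, $RG$ is not literally self-injective when $R$ is not a field; what you actually use is that $RG$ is a symmetric $R$-order, so that $R$-linear duality $N\mapsto\mathrm{Hom}_R(N,R)$ interchanges projective covers and relatively injective envelopes, and one sets $\Omega_R^{-1}N:=\mathrm{Hom}_R(\Omega_R(\mathrm{Hom}_R(N,R)),R)$. Second, the claim that projective summands ``vanish after reduction to $k$'' is false as stated (a nonzero projective $RG$-summand reduces to a nonzero projective $kG$-summand); the correct remedy is to observe that one may assume $V$ has no nonzero projective direct summands — any such summand contributes nothing to $\underline{\mathrm{End}}_{kG}(V)$ and lifts uniquely to a projective $RG$-summand, so it does not affect the deformation functor — and for $V$ (hence $M$) without projective summands, the \emph{minimal} $\Omega_R$ and $\Omega_R^{-1}$ are genuinely mutually inverse on lattices, with no spurious projective summands appearing. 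With these adjustments your argument is complete.
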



\section{The principal $2$-modular blocks of $S_5$ and $\hat{S}_5$}
\label{s:ks5}

Let $k$ be an algebraically closed field of characteristic $2$, let $W$ be the ring of infinite
Witt vectors over $k$ and let $F$ be the fraction field of $W$. 

Let $B$ (resp. $\hat{B}$) be the principal block of $kS_5$ (resp. of $k\hat{S}_5$). 
Then the defect groups of $B$ (resp. of $\hat{B}$) are dihedral groups of order $8$
(resp. generalized quaternion groups of order $16$). Looking at the ordinary and the $2$-modular
character table of $\hat{S}_5$ (see \S\ref{s:append}), we see that the decomposition 
matrix for $B$ (resp. for $\hat{B}$) is as in Figure \ref{fig:decomp}. 
\begin{figure}[ht] \hrule \caption{\label{fig:decomp} The decomposition matrix for $B$
(resp. for $\hat{B}$).}
$$\begin{array}{cc}
&\begin{array}{c@{}c}\varphi_0\,&\,\varphi_1\end{array}\\[1ex]
\begin{array}{c}\chi_1\\ \chi_2\\ \chi_3 \\ \chi_4\\ \chi_5\end{array} &
\left[\begin{array}{cc}1&0\\1&0\\1&1\\1&1\\2&1\end{array}\right]\end{array}
\qquad\qquad \left(\mbox{resp.}\quad
\begin{array}{cc}
&\begin{array}{c@{}c}\varphi_0\,&\,\varphi_1\end{array}\\[1ex]
\begin{array}{c}\psi_1\\ \psi_2\\ \psi_3 \\ \psi_4\\ \psi_5\\ \psi_6\\ \psi_7\\ \psi_8\end{array} &
\left[\begin{array}{cc}1&0\\1&0\\1&1\\1&1\\0&1\\2&1\\2&1\\2&1\end{array}\right]
\end{array}
\right).$$
\vspace{2ex}
\hrule
\end{figure}

\begin{rem}
\label{rem:ordinary}
The field $F$ is a splitting field for $S_5$. It follows from the ordinary character table
of $\hat{S}_5$ in Figure \ref{fig:ordchar2s5} and from \cite[Thm. A]{feit} that the Schur indices
of all irreducible characters of $\hat{S}_5$ with respect to $F$ are $1$. Hence 
the characters
$\psi_1,\psi_2,\ldots,\psi_6$ (resp. $\psi_7,\psi_8$) correspond to irreducible representations
of $\hat{S}_5$ which are realizable over $F$ (resp. over $F(\sqrt{2})$). Moreover,
$\psi_7,\psi_8$ are conjugate under the action of the Galois group of  $F(\sqrt{2})$ over $F$.
Hence the characters of  the irreducible representations of $\hat{S}_5$ over $F$ which belong to
$\hat{B}$ are
$$\psi_1,\psi_2,\ldots,\psi_5,\psi_6,\psi_7+\psi_8.$$
If $V_6$ (resp. $V_{78}$) is the $F\hat{S}_5$-module whose character is $\psi_6$ (resp.
$\psi_7+\psi_8$), then $\mathrm{End}_{F\hat{S}_5}(V_6)\cong F$ and 
$\mathrm{End}_{F\hat{S}_5}(V_{78})\cong F(\sqrt{2})$.
\end{rem}

Using the decomposition matrices in Figure \ref{fig:decomp}, it follows from
\cite[p. 294 and p. 303]{erd} that there exist $c\in\{0,1\}$ and $d\in k$ such that
$B$ (resp. $\hat{B}$) is Morita equivalent to $\Lambda_c=kQ/I_c$ (resp. 
$\hat{\Lambda}_d=kQ/\hat{I}_d$) as described in Figure \ref{fig:quiver}.
For the vertices $0,1$ in $Q$, the radical series of the corresponding
projective indecomposable $\Lambda_c$-modules $P_0,P_1$ and the corresponding
projective indecomposable  $\hat{\Lambda}_d$-modules $\hat{P}_0,\hat{P}_1$ are described 
in Figure \ref{fig:proj}. 
\begin{figure}[ht] \hrule \caption{\label{fig:quiver} 
The algebras 
$\Lambda_c=kQ/I_c$ ($c\in\{0,1\}$) and $\hat{\Lambda}_d=kQ/\hat{I}_d$ ($d\in k$).}
$$\xymatrix @R=-.2pc {
&0&1\\
Q=\quad& \ar@(ul,dl)_{\alpha} \bullet \ar@<.8ex>[r]^{\beta} &\bullet\ar@<.9ex>[l]^{\gamma}}$$
\begin{eqnarray*}
I_c&=&\langle \beta\gamma, \alpha^2-c(\gamma\beta\alpha)^2,(\gamma\beta\alpha)^2-
(\alpha\gamma\beta)^2\rangle,\\[1ex]
\hat{I}_d&=&\langle \gamma\beta\gamma-\alpha\gamma(\beta\alpha\gamma)^3,
\beta\gamma\beta-\beta\alpha(\gamma\beta\alpha)^3, \alpha^2-\gamma\beta(\alpha\gamma\beta)^3-
d(\alpha\gamma\beta)^4,\beta\alpha^2\rangle.
\end{eqnarray*}
\vspace{1ex}
\hrule
\end{figure}
\begin{figure}[ht] \hrule \caption{\label{fig:proj} The projective indecomposable
$\Lambda_c$-modules $P_0,P_1$ and the projective indecomposable $\hat{\Lambda}_d$-modules
$\hat{P}_0,\hat{P}_1$.}
$$P_0=\vcenter{ \xymatrix @R=.1pc @C=.3pc{&0&\\
0\ar@{.}[rddddd]&&1\\1&&0\\0&&0\\0&&1\\1&&0\\&0&}},\qquad 
P_1=\vcenter{ \xymatrix @R=.1pc @C=.3pc{1\\0\\0\\1\\0\\0\\1}};
\qquad\qquad \hat{P}_0=\vcenter{ \xymatrix @R=.1pc @C=.3pc{&0&\\
0\ar@{-}[rrdddddddddd]&&1\\1&&0\ar@{-}[llddddddddd]\\0&&0\\0&&1\\1&&0\\0&&0\\0&&1\\1&&0\\0&&0\\
0&&1\\1&&0\\&0&}},
\quad\hat{P}_1=\vcenter{ \xymatrix @R=.1pc @C=.3pc{&1&\\&0\ar@{-}[lddddd]&\\&&0\\&&1\\&&0\\&&0\\
1\ar@{-}[rddddd]&&1\\&&0\\&&0\\&&1\\&&0\\&0&\\&1&}}.$$
\hrule
\end{figure}

\begin{rem}
\label{rem:needthis}
Let $z$ be the non-trivial central element in $\hat{S}_5$ and let $Z=\langle z \rangle$
be the center of $\hat{S}_5$. In the following, we identify $S_5$ with $\hat{S}_5/Z$.
Let $\pi:k\hat{S}_5\to kS_5$ be the natural projection given by 
$\pi(g)=gZ$ for all $g\in \hat{S}_5$. Since $Z$ acts trivially on the simple $k\hat{S}_5$-modules,
we can identify the simple $kS_5$-modules with the simple $k\hat{S}_5$-modules.
This implies that the restriction of $\pi$ to $\hat{B}$ gives a surjective $k$-algebra
homomorphism $\pi_B:\hat{B}\to B$. In particular, if $V$ is a $kS_5$-module
belonging to $B$, then its inflation to $k\hat{S}_5$ via $\pi$ belongs to $\hat{B}$.
Let $\hat{e}$ be a sum of orthogonal primitive idempotents in $\hat{B}$ such that 
$\hat{e}\hat{B}\hat{e}$ is basic and Morita equivalent to $\hat{B}$, 
and let $e=\pi_B(\hat{e})$. Then $eBe$ is basic and Morita equivalent to $B$, and
the restriction of $\pi_B$ to $\hat{e}\hat{B}\hat{e}$ gives a surjective $k$-algebra homomorphism 
$\pi_e:\hat{e}\hat{B}\hat{e}\to eBe$.

If $c,d$ are such that $B$ is Morita equivalent to $\Lambda_c$ and $\hat{B}$ is Morita equivalent to
$\hat{\Lambda}_d$, let $\Lambda=\Lambda_c$ and $\hat{\Lambda}=\hat{\Lambda}_d$.
Then $\hat{e}\hat{B}\hat{e}\cong \hat{\Lambda}$ and
$eBe\cong \Lambda$, and $\pi_e$ induces a surjective $k$-algebra homomorphism
$\pi_\Lambda:\hat{\Lambda}\to \Lambda$.
It follows from the description of the projective indecomposable $\Lambda$-modules $P_0,P_1$
and the projective indecomposable $\hat{\Lambda}$-modules $\hat{P}_0,\hat{P}_1$ 
in Figure \ref{fig:proj} that $\Lambda\otimes_{\hat{\Lambda},\pi_\Lambda} \hat{P}_i\cong P_i$
for $i\in\{0,1\}$. In other words, the simple $\hat{\Lambda}$-module $\hat{P}_i/
\mathrm{rad}(\hat{P}_i)$ is isomorphic to the inflation via $\pi_\Lambda$ of the simple 
$\Lambda$-module $P_i/\mathrm{rad}(P_i)$ for $i\in\{0,1\}$.

Let $S_0=P_0/\mathrm{rad}(P_0)$ and $S_1=P_1/\mathrm{rad}(P_1)$.
Then $S_0$ corresponds to the trivial simple $kS_5$-module $T_0$, and
$S_1$ corresponds to the $4$-dimensional simple $kS_5$-module $T_1$ which is inflated 
from either one of the two $2$-dimensional simple $kA_5$-modules. 
The inflation of $T_0$ (resp. $T_1$) to $k\hat{S}_5$ via $\pi$ corresponds to the inflation
of $S_0$ (resp. $S_1$) to $\hat{\Lambda}$ via $\pi_\Lambda$. In particular, the former
inflations are simple $k\hat{S}_5$-modules, which we again denote by $T_0$ and $T_1$,
and the latter inflations
are simple $\hat{\Lambda}$-modules, which we again denote by $S_0$ and $S_1$. 
\end{rem}

We are now ready to prove part (a) of Theorem \ref{thm:supermain}. We assume the above
notation.

\medskip

\textit{Proof of part $(a)$ of Theorem $\ref{thm:supermain}$.}
Let $V$ be an indecomposable $kS_5$-module belonging to $B$, and denote its inflation
to $k\hat{S}_5$ also by $V$.  By Higman's criterion 
(see \cite[Thm. 1]{higman}), the $k\hat{S}_5$-module endomorphisms of $V$ that
factor through projective $k\hat{S}_5$-modules are precisely those in the image of the trace map
$\mathrm{Tr}_1^{\hat{S}_5}:\mathrm{End}_k(V)\to \mathrm{End}_{k\hat{S}_5}(V)$, where 
$\mathrm{Tr}_1^{\hat{S}_5}(\psi)(v)=\sum_{g\in \hat{S}_5}g\,\psi(g^{-1}v)$ for all 
$\psi\in\mathrm{End}_k(V)$
and all $v\in V$. Because $Z$ acts trivially on $V$, $\mathrm{Tr}_1^Z$ is multiplication by 
$2$. Hence $\mathrm{Tr}_1^Z$ is zero, which implies that 
$\mathrm{Tr}_1^{\hat{S}_5}=\mathrm{Tr}_Z^{\hat{S}_5}\circ \mathrm{Tr}_1^Z$ is also zero. 
It follows that
$\underline{\mathrm{End}}_{k\hat{S}_5}(V)\cong \mathrm{End}_{kS_5}(V)$. 
In particular, 
$\underline{\mathrm{End}}_{k\hat{S}_5}(V)\cong k$ if and only if
$\mathrm{End}_{kS_5}(V)\cong k$. 

Suppose now that $\mathrm{End}_{kS_5}(V)\cong k$. Then $V$ corresponds under the Morita
equivalence between $B$ and $\Lambda$ to an indecomposable $\Lambda$-module $M$
whose endomorphism ring is isomorphic to $k$. It follows from the description of the projective
indecomposable $\Lambda$-modules $P_0$ and $P_1$ in Figure \ref{fig:proj} that $M$
cannot be projective. Therefore, $M$ is inflated from an indecomposable
$\Lambda/\mathrm{soc}(\Lambda)$-module whose endomorphism ring is isomorphic to $k$.
Since $\Lambda/\mathrm{soc}(\Lambda)$
is a string algebra, all its indecomposable modules can be described using strings and
bands (see for example \cite{buri}). It follows that
a complete list of isomorphism classes of $\Lambda$-modules whose 
endomorphism rings are isomorphic to $k$ is given by
the following 6 uniserial $\Lambda$-modules which are uniquely determined, 
up to isomorphism, by their descending radical series:
\begin{equation}
\label{eq:list}
S_0, S_1, \begin{array}{c}0\\1\end{array}, \begin{array}{c}1\\0\end{array},
\begin{array}{c}0\\0\\1\end{array}, \begin{array}{c}1\\0\\0\end{array}.
\end{equation}
This completes the proof of part (a) of Theorem \ref{thm:supermain}.
\hfill $\Box$

\begin{rem}
\label{rem:needthis!!}
Since $\mathrm{Ext}^1_{\hat{\Lambda}}(S_0,S_1)\cong k \cong
\mathrm{Ext}^1_{\hat{\Lambda}}(S_1,S_0)$, there is up to isomorphism a unique
uniserial $\hat{\Lambda}$-module with descending composition factors $(S_0,S_1)$
(resp. $(S_1,S_0)$), which we denote by $M_{01}$ (resp. $M_{10}$). It follows that
the inflations via $\pi_\Lambda$ of the two-dimensional $\Lambda$-modules in the list 
(\ref{eq:list}) are isomorphic to $M_{01}$ or $M_{10}$. 

Because $\mathrm{Ext}^1_{\hat{\Lambda}}(S_0,M_{01})\cong k\cong 
\mathrm{Ext}^1_{\hat{\Lambda}}(M_{10},S_0)$, there is up to isomorphism a unique
uniserial $\hat{\Lambda}$-module with descending composition factors $(S_0,S_0,S_1)$
(resp. $(S_1,S_0,S_0)$), which we denote by $M_{001}$ (resp. $M_{100}$). It follows that
the inflations via $\pi_\Lambda$ of the three-dimensional $\Lambda$-modules in the list 
(\ref{eq:list}) are isomorphic to $M_{001}$ or $M_{100}$. 
\end{rem}


\section{Universal deformation rings}
\label{s:udr}

In this section we prove part (b) of Theorem \ref{thm:supermain}. 
We assume the notation from Section 
\ref{s:ks5}. In particular, $k$ is an algebraically closed field of characteristic $2$, and
$B$ (resp. $\hat{B}$) is the principal block of $kS_5$ (resp. of $k\hat{S}_5$). 
We need the following lemma. 

\begin{lemma}
\label{lem:ext2doubles5}
Suppose $\Lambda$, $\hat{\Lambda}$ and $\pi_\Lambda$ are as in Remark $\ref{rem:needthis}$.
Let $M$ be one of the two uniserial $\hat{\Lambda}$-modules $M_{001}$ or $M_{100}$ 
defined in Remark $\ref{rem:needthis!!}$.
Then $\mathrm{Ext}^2_{\hat{\Lambda}}(M,M)\cong k$.
\end{lemma}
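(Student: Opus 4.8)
The plan is to compute $\mathrm{Ext}^2_{\hat{\Lambda}}(M,M)$ directly from a minimal projective resolution of $M=M_{001}$ (the case $M=M_{100}$ being symmetric, or obtainable by applying the duality on $\hat{\Lambda}$ that swaps the two modules). First I would identify the first few terms of the minimal projective resolution of $M_{001}$. Since $M_{001}$ has descending composition factors $(S_0,S_0,S_1)$, its top is $S_0$, so the projective cover is $\hat{P}_0$; reading off the radical series of $\hat{P}_0$ from Figure \ref{fig:proj}, the kernel $\Omega(M_{001})$ is a uniserial submodule of $\hat{P}_0$ of length $12-3=9$, with top $S_1$. Hence $\Omega(M_{001})$ has projective cover $\hat{P}_1$, and $\Omega^2(M_{001})$ is the corresponding submodule of $\hat{P}_1$ of length $13-9=4$, with top $S_1$. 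Thus the start of the minimal resolution is $\hat{P}_1\to\hat{P}_0\to M_{001}\to 0$, and the next projective is again the cover of $\Omega^2(M_{001})$, namely $\hat{P}_1$ (since $\mathrm{top}(\Omega^2 M_{001})=S_1$). I would make each of these identifications precise by tracking the uniserial structure through the diagrams in Figure \ref{fig:proj}, using that $\hat{\Lambda}$ is (close to) a symmetric special biserial/quaternion-type algebra whose modules in this range are uniserial.

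Next I would apply $\mathrm{Hom}_{\hat{\Lambda}}(-,M_{001})$ to the truncated resolution $\hat{P}_1\xrightarrow{d_2}\hat{P}_1\xrightarrow{d_1}\hat{P}_0\to 0$ and compute the cohomology at the middle term. Using $\mathrm{Hom}_{\hat{\Lambda}}(\hat{P}_i,M_{001})\cong e_i M_{001}$ (the $S_i$-multiplicity space of $M_{001}$), we have $\mathrm{Hom}(\hat{P}_0,M_{001})$ of dimension $2$ (two copies of $S_0$) and $\mathrm{Hom}(\hat{P}_1,M_{001})$ of dimension $1$ (one copy of $S_1$). So $\mathrm{Ext}^2_{\hat{\Lambda}}(M_{001},M_{001})$ is a subquotient of a $1$-dimensional space, hence has dimension $0$ or $1$. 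To pin it down I would show the relevant map $d_2^\ast:\mathrm{Hom}(\hat{P}_1,M_{001})\to\mathrm{Hom}(\hat{P}_1,M_{001})$ is zero, equivalently that precomposition with $d_2$ kills the generator. This amounts to checking that the image of $d_2$ (a length-$4$ submodule of the second $\hat{P}_1$, sitting inside its radical) maps, under any homomorphism $\hat{P}_1\to M_{001}$, into $0$ — which holds because the single $S_1$-composition factor of $M_{001}$ is at the socle, so any such homomorphism has image of length $\le 1$ and factors through $\hat{P}_1/\mathrm{rad}\,\hat{P}_1$, while $\mathrm{im}(d_2)\subseteq\mathrm{rad}(\hat{P}_1)$. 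Hence $d_2^\ast=0$ and $\mathrm{Ext}^2_{\hat{\Lambda}}(M_{001},M_{001})\cong k$.

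I would also reconcile this with $d_1^\ast$ to be safe: $\mathrm{Ext}^1_{\hat{\Lambda}}(M_{001},M_{001})=\ker(d_2^\ast)/\mathrm{im}(d_1^\ast)$, and a parallel argument (the generator of $\mathrm{Hom}(\hat{P}_0,M_{001})$ sending top to top does not precompose with $d_1$ to the generator of $\mathrm{Hom}(\hat{P}_1,M_{001})$, since $\mathrm{im}(d_1)\subseteq\mathrm{rad}(\hat{P}_0)$ maps into a length-$\le 2$ portion that contains no $S_1$ in its top) shows $\mathrm{im}(d_1^\ast)=0$, giving $\mathrm{Ext}^1_{\hat{\Lambda}}(M_{001},M_{001})\cong k$ as well — consistent with $M_{001}$ having one-dimensional stable endomorphism ring and a one-parameter deformation theory, matching case (iv) of Theorem \ref{thm:supermain}. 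The main obstacle is the bookkeeping in the first paragraph: correctly reading the uniserial submodule structure of $\hat{P}_0$ and $\hat{P}_1$ from Figure \ref{fig:proj} and verifying that $\Omega^2(M_{001})$ genuinely has top $S_1$ with $\mathrm{im}(d_2)$ contained in the radical of the relevant projective — everything after that is a short linear-algebra computation with $1$- and $2$-dimensional $\mathrm{Hom}$-spaces. Alternatively, if the direct resolution computation proves delicate, one can instead invoke that $\hat{\Lambda}$ is a Brauer tree/quaternion-type algebra and use Erdmann's \cite{erd} description together with the Morita equivalence to read off $\dim\mathrm{Ext}^2$, but the resolution approach is self-contained and I would present that.
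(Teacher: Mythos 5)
Your plan — compute a minimal projective resolution of $M_{001}$ and read off $\mathrm{Ext}^2$ from the Hom complex — is a reasonable alternative to the paper's argument, but the resolution you wrote down is wrong, and the error propagates to the conclusion.

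The decisive misstep is the claim that $\hat{P}_0$ and $\hat{P}_1$ are uniserial of lengths $12$ and $13$, so that $\Omega(M_{001})$ is a uniserial submodule of $\hat{P}_0$ with top $S_1$. Reading Figure~\ref{fig:proj} carefully, $\hat{P}_0$ and $\hat{P}_1$ are \emph{biserial}: each diagram bifurcates into two chains that rejoin at the socle, so $\hat{P}_0$ has $24$ composition factors and $\hat{P}_1$ has $14$ (in the basic algebra $\hat{\Lambda}$), consistent with the Cartan matrix $\left(\begin{smallmatrix}16&8\\8&6\end{smallmatrix}\right)$ coming from the decomposition matrix in Figure~\ref{fig:decomp}. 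Since $M_{001}$ has composition factors $(S_0,S_0,S_1)$, its first syzygy is $24-3=21$-dimensional and is \emph{not} uniserial: one generator is needed to kill the $S_1$ in the first radical layer of $\hat{P}_0$ and a second to cut the $\alpha$-chain after $S_1$, so $\mathrm{top}(\Omega(M_{001}))\cong S_1\oplus S_0$ and the projective cover of $\Omega(M_{001})$ is $\hat{P}_1\oplus\hat{P}_0$, not $\hat{P}_1$. Consequently $\Omega^2(M_{001})$ is $14+24-21=17$-dimensional — exactly what the paper's Figure~\ref{fig:syzygy} displays — and its top is $S_1\oplus S_1$, so the next projective is $\hat{P}_1\oplus\hat{P}_1$ and $\mathrm{Hom}_{\hat{\Lambda}}(\hat{P}_2,M_{001})\cong k^2$, not $k$. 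Your conclusion that $\mathrm{Ext}^2$ is \emph{a priori} a subquotient of a one-dimensional space therefore fails, and the Loewy-layer argument you give only yields $d_3^*=0$; it leaves open that $\mathrm{Ext}^2(M_{001},M_{001})\cong k^2/\mathrm{im}(d_2^*)$ could be two-dimensional. To finish one must show $\mathrm{im}(d_2^*)$ is exactly one-dimensional, which is precisely the nontrivial step. The paper handles this by observing that $\mathrm{Ext}^2_{\hat{\Lambda}}(M_{001},M_{001})\cong\underline{\mathrm{Hom}}_{\hat{\Lambda}}(\Omega^2(M_{001}),M_{001})$, computing $\mathrm{Hom}_{\hat{\Lambda}}(\Omega^2(M_{001}),M_{001})\cong k^2$ directly from the explicit structure of $\Omega^2(M_{001})$ in Figure~\ref{fig:syzygy}, and then identifying a one-dimensional subspace of maps factoring through the projective $\hat{P}_0$. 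Your "cross-check" on $\mathrm{Ext}^1$ has the same flaw: with the correct resolution the relevant Hom space is $\mathrm{Hom}(\hat{P}_1\oplus\hat{P}_0,M_{001})\cong k^3$, not $k$.

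If you want to salvage the resolution approach, you must first redo the bookkeeping with the true (biserial) structure of $\hat{P}_0$, $\hat{P}_1$, determine that $\Omega(M_{001})$ has a two-generator top, write the correct resolution $\hat{P}_1\oplus\hat{P}_1\to\hat{P}_1\oplus\hat{P}_0\to\hat{P}_0\to M_{001}$, and then compute the rank of $d_2^*$ — at which point you are essentially reproducing the paper's stable-Hom computation in different notation.
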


\begin{proof}
It follows from the description of the projective indecomposable 
$\hat{\Lambda}$-mod\-ules in Figure \ref{fig:proj} that $\Omega^2_{\hat{\Lambda}}(M_{001})$
and $\Omega^{-2}_{\hat{\Lambda}}(M_{100})$ can be described as in Figure \ref{fig:syzygy}.
\begin{figure}[ht] \hrule \caption{\label{fig:syzygy} The syzygies $\Omega^2_{\hat{\Lambda}}(M_{001})$
and $\Omega^{-2}_{\hat{\Lambda}}(M_{100})$.}
$$\Omega^2_{\hat{\Lambda}}\left(M_{001}\right) = 
\vcenter{ \xymatrix @R=.1pc @C=.3pc{
1&&\\0\ar@{-}[rrddddddddd]&&\\0&&\\1&&\\0&&\\0&&\\1&&\\
0&&&&1\\0&&&&0\\1&&&0&&1\\0&&1\\&0&}}\quad\mbox{and}\quad
\Omega^{-2}_{\hat{\Lambda}}\left(M_{100}\right) = 
\vcenter{ \xymatrix @R=.1pc @C=.3pc{&&&&0&\\
&&&1&&0\\1&&0\ar@{-}[rrrddddddddd]&&&1\\&0&&&&0\\&1&&&&0\\&&&&&1\\&&&&&0\\
&&&&&0\\&&&&&1\\&&&&&0\\&&&&&0\\&&&&&1}}.
$$
\hrule
\end{figure}
This implies that 
$$\mathrm{Hom}_{\hat{\Lambda}}(\Omega^2_{\hat{\Lambda}}(M_{001}),M_{001})\cong k^2\quad
\mbox{ and }\quad
\mathrm{Hom}_{\hat{\Lambda}}(M_{100},\Omega^{-2}_{\hat{\Lambda}}(M_{100}))\cong k^2.$$
Since in both cases there is a one-dimensional subspace of these Hom spaces consisting of
homomorphisms factoring through $\hat{P}_0$, we obtain
$$\mathrm{Ext}^2_{\hat{\Lambda}}(M_{001},M_{001})\cong
\underline{\mathrm{Hom}}_{\hat{\Lambda}}(\Omega^2_{\hat{\Lambda}}(M_{001}),
M_{001})\cong k$$ and
$$\mathrm{Ext}^2_{\hat{\Lambda}}(M_{100},M_{100})\cong
\underline{\mathrm{Hom}}_{\hat{\Lambda}}(M_{100},\Omega^{-2}_{\hat{\Lambda}}(M_{100}))
\cong k.$$
\end{proof}

\medskip

\noindent
\textit{Proof of part $(b)$ of Theorem $\ref{thm:supermain}$.}
We go through the four different cases in the statement of the theorem.

\medskip

\noindent\textit{Case $(i).$}
Since the maximal abelian $2$-quotient group of both $S_5$ and $\hat{S}_5$ is a cyclic group of order $2$,
it follows e.g. from \cite[\S1.4]{maz1} that $R(S_5,T_0)\cong W[\mathbb{Z}/2]\cong R(\hat{S}_5,T_0)$.

\medskip

\noindent\textit{Case $(ii).$}
Let $E$ be one of the two non-isomorphic $2$-dimensional simple $kA_5$-modules, where $A_5$
denotes the alternating group on $5$ letters which is a subgroup of $S_5$. Then  
$T_1$ is isomorphic to the induction $\mathrm{Ind}_{A_5}^{S_5} E$. 
It follows from the description of the projective indecomposable $\Lambda$-modules
(resp. $\hat{\Lambda}$-modules) in Figure \ref{fig:proj} that 
$\mathrm{Ext}^1_{kS_5}(T_1,T_1)=0=\mathrm{Ext}^1_{k\hat{S}_5}(T_1,T_1)$.
Hence by \cite[Prop. 2.1.3]{3sim} and by \cite[Lemma 3.5(c)]{bl}, we have $R(S_5, T_1)\cong k$. 
Since it can be seen from the decomposition matrix for $\hat{B}$ in Figure \ref{fig:decomp} that 
$T_1$ when viewed as a $k\hat{S}_5$-module has a lift over $W$, we have
$R(\hat{S}_5,T_1)\cong W$.

\medskip

\noindent\textit{Case $(iii).$}
Suppose $V\in\left\{\begin{array}{c}T_0\\T_1\end{array},\begin{array}{c}T_1\\T_0\end{array}
\right\}$. It follows from the description of the projective indecomposable $\Lambda$-modules
in Figure \ref{fig:proj} that 
$$\mathrm{Ext}^1_{kS_5}(V,V)\cong
\underline{\mathrm{Hom}}_B(\Omega_B(V),V)\cong k$$ 
where 
$\Omega_B$ denotes the syzygy in the category of finitely generated $B$-modules.
Moreover, there is a non-split short exact sequence of $kS_5$-modules $0\to V\to U\to V\to 0$
where
\begin{equation}
\label{eq:sessos}
U=\begin{array}{c@{}c@{}c@{}c}&&T_0\\&T_0&&T_1\\T_1\end{array}
\mbox{ if $V=\begin{array}{c}T_0\\T_1\end{array}$, and}\quad
U=\begin{array}{c@{}c@{}c@{}c}T_1\\&T_0&&T_1\\&&T_0\end{array}
\mbox{ if $V=\begin{array}{c}T_1\\T_0\end{array}$.}
\end{equation} 
Let $C$ be the cyclic subgroup of $S_5$ of order $2$ generated by the transposition $(1,2)$.
Since $T_1\cong \mathrm{Ind}_{A_5}^{S_5} E$, it follows that $\mathrm{Res}_C^{S_5} T_1$ is
a projective $kC$-module, and hence isomorphic to $kC\oplus kC$. 
Moreover, if $T_{00}$ is the uniserial $kS_5$-module $T_{00}=
\begin{array}{c}T_0\\T_0\end{array}$ then $\mathrm{Res}_C^{S_5} T_{00}$
cannot be trivial since $\mathrm{Res}_{A_5}^{S_5} T_{00}$ is trivial. Hence $\mathrm{Res}_C^{S_5} 
T_{00}\cong kC$.
This means that 
$$\mathrm{Res}_C^{S_5}V\cong k\oplus (kC)^2,
\qquad\mbox{and}\qquad \mathrm{Res}_C^{S_5} U\cong (kC)^5.$$
Thus $\mathrm{Res}_C^{S_5}V$ is a $kC$-module whose stable endomorphism ring is isomorphic to
$k$ and whose universal deformation ring is $R(C,\mathrm{Res}_C^{S_5}V )\cong W[\mathbb{Z}/2]$.
Let $(U_{V,C},\phi_{U,C})$  be a universal lift of $\mathrm{Res}^{S_5}_{C}V$ over $W[\mathbb{Z}/2]$,
and let $(U_{V},\phi_U)$ be a universal lift of $V$ over $R(S_5,V)$. 
Then there exists a unique $W$-algebra homomorphism $\sigma:W[\mathbb{Z}/2]\to R(S_5,V)$ in 
$\mathcal{C}$  such that $(\mathrm{Res}^{S_5}_{C}U_{V},\mathrm{Res}^{S_5}_{C}\phi_U)$
is isomorphic to $(R(S_5,V)\otimes_{W[\mathbb{Z}/2],\sigma} U_{V,C},\phi_{U,C})$. 
To prove that $\sigma$ is surjective, consider all morphisms $\rho:R(S_5,V)\to k[\epsilon]/(\epsilon^2)$.
Since $\mathrm{Res}^{S_5}_C U\cong (kC)^5$ for the $kS_5$-module $U$ from 
$(\ref{eq:sessos})$, $\mathrm{Res}^{S_5}_C U$ defines a non-trivial lift of $\mathrm{Res}^{S_5}_C V$
over $k[\epsilon]/(\epsilon^2)$. Because $U$  defines a non-trivial lift of
$V$ over $k[\epsilon]/(\epsilon^2)$ and because $\mathrm{Ext}^1_{kS_5}(V,V)\cong k$, this implies
that as $\rho$ ranges over the morphisms 
$R(S_5,V)\to k[\epsilon]/(\epsilon^2)$, $\rho\circ \sigma$ ranges over the morphisms
$W[\mathbb{Z}/2]\to k[\epsilon]/(\epsilon^2)$. Hence $\sigma$ is surjective.
It follows from the decomposition matrix for $B$ in Figure \ref{fig:decomp}
and \cite[Prop. (23.7)]{CR} that $V$ has two non-isomorphic lifts over $W$ whose $F$-characters are
$\chi_3$ and $\chi_4$, respectively.
Thus there are two distinct morphisms $R(S_5,V)\to W$ in $\mathcal{C}$, which implies
that $\mathrm{Spec}(R(S_5,V))$ contains both points of the generic 
fiber of $\mathrm{Spec}(W[\mathbb{Z}/2])$.
Since the Zariski closure of these points is all of $\mathrm{Spec}(W[\mathbb{Z}/2])$,
it follows that $R(S_5,V)$ is isomorphic to $W[\mathbb{Z}/2]$.

Viewing $V$ as $k\hat{S}_5$-module by inflation, it follows from the description of the projective
indecomposable $\hat{\Lambda}$-modules in Figure \ref{fig:proj} that 
$$\mathrm{Ext}^1_{k\hat{S}_5}(V,V)\cong\underline{\mathrm{Hom}}_{\hat{B}}(\Omega_{\hat{B}}(V),
V)\cong k$$ 
where $\Omega_{\hat{B}}$ denotes the syzygy in the category of finitely generated 
$\hat{B}$-modules. Moreover, the module $U$ from $(\ref{eq:sessos})$ when viewed as a
$k\hat{S}_5$-module by inflation defines a non-trivial lift $(U,\nu)$ of $V$ over $k[\epsilon]/(\epsilon^2)$
when viewed as a $k\hat{S}_5$-module. Hence there exists a surjective $k$-algebra homomorphism
$$\tau:R(\hat{S}_5,V)/2R(\hat{S}_5,V)\to k[t]/(t^2)$$
corresponding to $(U,\nu)$. We now show that $\tau$ is a $k$-algebra isomorphism. 
Suppose this is false. Then there exists a surjective $k$-algebra homomorphism 
$\tau_1:R(\hat{S}_5,V)/2R(\hat{S}_5,V)\to k[t]/(t^3)$ such that $\delta\circ\tau_1=\tau$ where 
$\delta:k[t]/(t^3)\to k[t]/(t^2)$ is the natural projection. Let $(U_1,\nu_1)$ be a lift of $V$ over 
$k[t]/(t^3)$ relative to $\tau_1$. Then $k[t]/(t^2)\otimes_{k[t]/(t^3),\delta}U_1\cong U$ and 
$t^2U_1\cong V$. Thus we have a short exact sequence of $k[t]/(t^3)\,\hat{S}_5$-modules
\begin{equation}
\label{eq:thesos}
0\to t^2U_1\to U_1\to U\to 0.
\end{equation}
Since $\mathrm{Ext}^1_{k\hat{S}_5}(U,V)=0$, the sequence $(\ref{eq:thesos})$ splits as a sequence of 
$k\hat{S}_5$-modules. Thus $U_1\cong V\oplus U$ as $k\hat{S}_5$-modules. 
Since $V$ and $U$ are $kS_5$-modules, $U_1$ is inflated from a $kS_5$-module. 
Because there is no lift of $V$ over $k[t]/(t^3)$ when $V$ is viewed as a $kS_5$-module, this implies
that $U_1$ does not exist. Hence $\tau$ is a $k$-algebra isomorphism and
$R(\hat{S}_5,V)/2R(\hat{S}_5,V)\cong k[t]/(t^2)\cong R(S_5,V)/2R(S_5,V)$. Since
$R(S_5,V)$ is a $W$-algebra quotient of $R(\hat{S}_5,V)$ which is free as a $W$-module, 
this implies that
$R(\hat{S}_5,V)\cong R(S_5,V)\cong W[\mathbb{Z}/2]$.

\medskip

\noindent\textit{Case $(iv).$}
Suppose $V\in\left\{\begin{array}{c}T_0\\T_0\\T_1\end{array},
\begin{array}{c}T_1\\T_0\\ T_0\end{array}\right\}$. It follows from the description of the projective 
indecomposable modules in Figure \ref{fig:proj} that 
$$\mathrm{Ext}^1_{kS_5}(V,V)\cong k\cong \mathrm{Ext}^1_{k\hat{S}_5}(V,V).$$
Moreover, we see from Figure \ref{fig:proj} that there is a uniserial $kS_5$-module $X$ with
descending composition factors
$$(T_0,T_0,T_1,T_0,T_0,T_1)\qquad \mbox{(resp. $(T_1,T_0,T_0,T_1,T_0,T_0)$)}$$
such that $X$ defines a lift $(X,\xi)$ of $V$ over $k[t]/(t^2)$ when the descending composition 
factors of $V$ are $(T_0,T_0,T_1)$
(resp. $(T_1,T_0,T_0)$). Additionally, there is a uniserial $k\hat{S}_5$-module
$Y$ with descending composition factors
$$(T_0,T_0,T_1,T_0,T_0,T_1,T_0,T_0,T_1)\qquad \mbox{(resp. 
$(T_1,T_0,T_0,T_1,T_0,T_0,T_1,T_0,T_0)$)}$$ 
such that $Y$ defines a lift $(Y,\zeta)$ of $V$ over $k[t]/(t^3)$
when $V$ is viewed as a $k\hat{S}_5$-module by inflation. Since
$$\mathrm{Ext}^1_{kS_5}(X,V)=0=\mathrm{Ext}^1_{k\hat{S}_5}(Y,V),$$ we see that
$$R(S_5,V)/2R(S_5,V)\cong k[t]/(t^2)\qquad\mbox{and}\qquad 
R(\hat{S}_5,V)/2R(\hat{S}_5,V)\cong k[t]/(t^3).$$
Moreover, the isomorphism class of the lift $(X,\xi)$ is the universal mod $2$ deformation of $V$ 
when $V$ is viewed as a $kS_5$-module,
and the isomorphism class of the lift $(Y,\zeta)$ is the universal mod $2$ deformation of $V$ 
when $V$ is viewed as a $k\hat{S}_5$-module.

It follows from the decomposition matrix for $B$ in Figure \ref{fig:decomp} that $V$ has a 
lift over $W$. Hence by \cite[Lemma 2.1]{bc5}, there exist $\mu\in\{0,1\}$, $m\in\mathbb{Z}^+$ and 
$\lambda\in W$ such that 
$$R(S_5,V)\cong W[[t]]/(t^2-2\lambda t,\mu 2^mt).$$
Since $X\cong \Omega_B^i(T_1)$ for either $i=1$ or $i=-1$, it follows that
$X$ has a universal deformation ring when viewed as a $kS_5$-module and
$R(S_5,X)\cong k$ by the proof of Case (ii) and by Lemma \ref{lem:defhelp}.
If $\mu=0$ (resp. $\mu=1$), then $R(S_5,V)$ (resp. $(W/2^mW)\otimes_W R(S_5,V)$) 
is free over $W$ (resp. $W/2^mW$). This implies that $X$, when regarded as a $kS_5$-module,
has a lift over $W$ (resp. $W/2^mW$). Hence $\mu=1$ and $m=1$, and so $R(S_5,V)\cong
W[[t]]/(t^2-2\lambda t,2t)\cong W[[t]]/(t^2,2t)$.

Since $\mathrm{Ext}^2_{k\hat{S}_5}(V,V)\cong k$
by Lemma \ref{lem:ext2doubles5}, it follows from \cite[\S1.6]{maz1}
that there exists an element $f(t)\in W[[t]]$ such that $R(\hat{S}_5,V)\cong W[[t]]/(f(t))$. 
Since $R(\hat{S}_5,V)/2 R(\hat{S}_5,V)\cong k[t]/(t^3)$, it follows by the Weierstrass Preparation 
Theorem (see e.g. \cite[Thm. IV.9.2]{lang}) that $f(t)$ can be taken to be of the form 
$f(t)=t^3+at^2+bt+c$ for certain $a,b,c\in 2W$. In particular, $R(\hat{S}_5,V)$ is free as a $W$-module.
Let $(Y^W,\zeta_W)$ be a universal lift of $V$ over $R(\hat{S}_5,V)$
when $V$ is viewed as a $k\hat{S}_5$-module. 
Since the isomorphism class of $(Y,\zeta)$ is the universal mod $2$ deformation of $V$ as a
$k\hat{S}_5$-module, it follows that $Y^W$ defines a lift $(Y^W,\omega)$ of $Y$ over $W$
when $Y$ is viewed as a $k\hat{S}_5$-module. 
If $Y/\mathrm{rad}(Y)\cong T_1$ then $Y$ is a quotient module of the projective indecomposable
$k\hat{S}_5$-module $\hat{P}_{T_1}$ with $\hat{P}_{T_1}/\mathrm{rad}(\hat{P}_{T_1})\cong T_1$. 
Hence $Y^W$ must be a quotient module of 
the projective indecomposable $W\hat{S}_5$-module $\hat{P}^W_{T_1}$ which is a lift of
$\hat{P}_{T_1}$ over $W$, and we define $Z^W=Y^W$.
If $\mathrm{soc}(Y)\cong T_1$ then $\Omega^{-1}(Y)$ is a quotient module of $\hat{P}_{T_1}$, and
by Lemma \ref{lem:defhelp}, $\Omega^{-1}(Y)$ has a lift $({Y'}^W,\omega')$ over $W$. 
Hence ${Y'}^W$ must be a
quotient module of $\hat{P}^W_{T_1}$. But then the kernel of the surjection $\hat{P}^W_{T_1}\to {Y'}^W$
is a $W$-pure submodule of $\hat{P}^W_{T_1}$, and we define
$Z^W$ to be this kernel.
Therefore we have for both cases of $Y$ that $Z^W$ defines a lift of $Y$ over $W$ and
that $Z^W$ is either a quotient 
module or a submodule of $\hat{P}^W_{T_1}$.
Thus it follows from the decomposition matrix for $\hat{B}$ in Figure
 \ref{fig:decomp} that the $F$-character of $Z^W$ is equal to 
$$\chi_Z=\psi_6+(\psi_7+\psi_8).$$
This implies by Remark \ref{rem:ordinary} that the endomorphism ring of
$F\otimes_W Z^W\cong V_6\oplus V_{78}$ is isomorphic to $F\times F(\sqrt{2})$.
Let $u$ be an element in $\hat{S}_5$ of order $8$ belonging to the conjugacy class
$C_9$ in Figure \ref{fig:ordchar2s5} and let $K_u$ be its class sum in 
$W\hat{S}_5$. Because $K_u$ lies in the center of $W\hat{S}_5$,  multiplication by
$K_u$ defines a $W\hat{S}_5$-module endomorphism $\kappa_u$ of $Z^W$. 
Since $Z^W$ is free as a $W$-module, the endomorphism ring
$\mathrm{End}_{W\hat{S}_5}(Z^W)$ embeds naturally into 
\begin{eqnarray*}
F\otimes_W\mathrm{End}_{W\hat{S}_5}(Z^W)&\cong& 
\mathrm{End}_{F\hat{S}_5}(F\otimes_WZ^W)\\
&\cong& \mathrm{End}_{F\hat{S}_5}(V_6)\times \mathrm{End}_{F\hat{S}_5}(V_{78})
\;\cong\, F\times F(\sqrt{2}).
\end{eqnarray*}
Hence $\kappa_u$ corresponds to an element in $F\times F(\sqrt{2})$ which we can read off 
from the ordinary character table of $\hat{S}_5$. Namely,
the endomorphism $\kappa_u$ in $\mathrm{End}_{W\hat{S}_5}(U^W)$ corresponds 
to the element 
$$(0,5\,\sqrt{2})\in F\times F(\sqrt{2}).$$ 
Because $(0,5\,\sqrt{2})$ generates a
$W$-subalgebra of $F\times F(\sqrt{2})$ which is isomorphic to $W[[t]]/(t^3-2t)$,
it follows that $Z^W$ is a $W[[t]]/(t^3-2t)\hat{S}_5$-module. Taking a $k[t]/(t^3)$-basis 
$\{b_1,\ldots,b_6\}$ of $Y$, we can lift this basis to a  subset $\{c_1,\ldots, c_6\}$ of $Z^W$ 
which generates $Z^W$ as a $W[[t]]/(t^3-2t)$-module. 
Since $F\otimes_W Z^W$ is a free $(F\times F(\sqrt{2}))$-module 
of rank $6$, it follows that $c_1,\ldots, c_6$ must be linearly independent over $W[[t]]/(t^3-2t)$.
Thus $Z^W$ defines a lift of $V$ over $W[[t]]/(t^3-2t)$. Since 
$Z^W/2Z^W\cong Y$ is an indecomposable $k\hat{S}_5$-module, 
$W[[t]]/(t^3-2t)$ is a quotient algebra of $R(\hat{S}_5,V)$.
This implies that we can take $f(t)=t^3-2t$, and hence $R(\hat{S}_5,V)\cong W[[t]]/(t^3-2t)$.
\hfill $\Box$

\begin{rem}
\label{rem:requested}
Suppose $G$ and $\hat{G}$ are two finite groups such that the Sylow $2$-subgroups
of $G$ are dihedral groups of order 8 and the Sylow $2$-subgroups of $\hat{G}$ are
generalized quaternion groups of order 16 and such that $\hat{G}$ is an extension
of $G$ by a central subgroup of order 2. Moreover, assume that there exist $c\in\{0,1\}$ and
$d\in k$ such that the principal block $B$ (resp. $\hat{B}$) of $kG$ (resp. $k\hat{G}$) is Morita 
equivalent to $\Lambda_c$ (resp. $\hat{\Lambda}_d$) as in Figure \ref{fig:quiver}. 
Many of the arguments in this paper work for this more general case. However, when computing
the universal deformation rings for the cases (iii) and (iv) of part (b) of Theorem \ref{thm:supermain}, one
runs into the following issues. First, one needs to prove in general that there is an element of
order $2$ in $G$ that can take the place of the transposition $(1,2)\in S_5$ when computing the
universal deformation ring $R(G,V)$ in case (iii). Second, one needs to establish similar
facts to the ones in Remark \ref{rem:ordinary} for the irreducible representations of $\hat{G}$
over $F$ which belong to $\hat{B}$, including the values of the ordinary characters on certain
conjugacy classes, when computing the universal deformation ring $R(\hat{G},V)$ in case (iv).
\end{rem}


\section{Appendix: The ordinary and the $2$-modular character table of $\hat{S}_5$}
\label{s:append}

The ordinary character table of $\hat{S}_5$ can be found for example in \cite[p. 289]{hoffhum}.
It is then straightforward to determine the ordinary character table of $S_5$ and also the
$2$-modular character table of $S_5$ and $\hat{S}_5$. The ordinary characters
$\chi_1,\ldots, \chi_4, \chi_5$ of $S_5$ in Figure \ref{fig:decomp} correspond to the
ordinary characters $\psi_1,\ldots,\psi_4,\psi_6$ of $\hat{S}_5$ in Figures \ref{fig:decomp}
and \ref{fig:ordchar2s5}.

\begin{figure}[ht]  \caption{\label{fig:ordchar2s5} The ordinary character table of
$\hat{S}_5$.}
\begin{small}
$$\begin{array}{lrrrrrrrrrrrr}
\mathrm{class:}&C_1&C_2&C_3&C_4&C_5&C_6&C_7&C_8&C_9&C_{10}&C_{11}&C_{12}\\
\mathrm{order:}&1&2&4&3&6&5&10&4&8&8&12&12\\
\mathrm{length:}&1&1&30&20&20&24&24&20&30&30&20&20\\[2ex]
\psi_1&1&1&1&1&1&1&1&1&1&1&1&1\\
\psi_2&1&1&1&1&1&1&1&-1&-1&-1&-1&-1\\
\psi_3&5&5&1&-1&-1&0&0&1&-1&-1&1&1\\
\psi_4&5&5&1&-1&-1&0&0&-1&1&1&-1&-1\\
\psi_5&4&-4&0&-2&1&-1&1&0&0&0&0&0\\
\psi_6&6&6&-2&0&0&1&1&0&0&0&0&0\\
\psi_7&6&-6&0&0&0&1&-1&0&\sqrt{2}&-\sqrt{2}&0&0\\
\psi_8&6&-6&0&0&0&1&-1&0&-\sqrt{2}&\sqrt{2}&0&0\\
\psi_9&4&4&0&1&1&-1&-1&2&0&0&-1&-1\\
\psi_{10}&4&4&0&1&1&-1&-1&-2&0&0&1&1\\
\psi_{11}&4&-4&0&1&-1&-1&1&0&0&0&\sqrt{3}&-\sqrt{3}\\
\psi_{12}&4&-4&0&1&-1&-1&1&0&0&0&-\sqrt{3}&\sqrt{3}
\end{array}$$
\end{small}
\end{figure}

\begin{figure}[ht]  \caption{\label{fig:modchar2s5} The $2$-modular character table of
$S_5$ and $\hat{S}_5$.}
\begin{small}
$$\begin{array}{lrrrrrrrrrrrr}
\mathrm{class:}&C_1&C_4&C_6\\
\mathrm{order:}&1&3&5\\[2ex]
\varphi_0&1&1&1\\
\varphi_1&4&-2&-1\\
\varphi_2&4&1&-1\end{array}$$
\end{small}
\end{figure}



\begin{thebibliography}{88}

\bibitem{alp} J.~L.~Alperin, Local representation theory. Modular representations as an introduction to 
the local representation theory of finite groups. Cambridge Studies in Advanced Mathematics, vol. 11, 
Cambridge University Press, Cambridge, 1986.

\bibitem{bl} F.~M.~Bleher, Universal deformation rings and Klein four defect groups. Trans. Amer. Math. Soc. 354 (2002), 3893--3906.
	
\bibitem{diloc} F.~M.~Bleher, Universal deformation rings for dihedral $2$-groups. 
J. London Math. Soc. 79, (2009), 225--237.  

\bibitem{3sim} F.~M.~Bleher, Universal deformation rings and dihedral defect groups. 
Trans. Amer. Math. Soc. 361 (2009),  3661--3705. 

\bibitem{bc} F.~M.~Bleher and T.~Chinburg, Universal deformation rings and cyclic blocks. Math. Ann. 318 (2000), 805--836.

\bibitem{bc5} F.~M.~Bleher and T.~Chinburg, Universal deformation rings need not be complete 
intersections. Math. Ann. 337 (2007), 739--767. 

\bibitem{llosent} F.~M.~Bleher and G.~Llosent, Universal deformation rings for the symmetric group 
$S_4$. In press, Algebr. Represent. Theory, DOI: 10.1007/s10468-008-9120-7.

\bibitem{buri} M.~C.~R.~Butler and C.~M.~Ringel, Auslander-Reiten sequences with few middle terms and applications to string algebras. Comm. Algebra 15 (1987), 145--179. 

\bibitem{carl2} J.~F.~Carlson and J.~Th\'{e}venaz, The classification of endo-trivial modules.  
Invent. Math.  158  (2004),  389--411.

\bibitem{cornell} G.~Cornell, J.~H.~Silverman and G.~Stevens (eds.), Modular Forms and Fermat's Last Theorem (Boston, 1995). Springer-Verlag, Berlin-Heidelberg-New York, 1997.

\bibitem{CR} C.~W.~Curtis and I.~Reiner, Methods of representation theory. Vol. I. With applications to 
	finite groups and orders. John Wiley and Sons, Inc., New York, 1981. 

\bibitem{lendesmit} B.~de Smit and H.~W.~Lenstra, Explicit construction of universal deformation rings. In: Modular Forms and Fermat's Last Theorem (Boston, MA, 1995), Springer-Verlag, Berlin-Heidelberg-New York, 1997, pp. 313--326.

\bibitem{erd} K.~Erdmann, Blocks of Tame Representation Type and Related Algebras, Lecture Notes in Mathematics, vol. 1428, Springer-Verlag, Berlin-Heidelberg-New York, 1990.

\bibitem{feit} W.~Feit, The computation of some Schur indices, Israel J. Math. 46 (1983), 274--300.

\bibitem{froelich} J.~B.~Froelich, Universal deformation rings related to the symmetric group $S_5$. 
	Dissertation, University of Iowa, 2008.

\bibitem{higman} D.~G.~Higman, Modules with a group of operators. Duke Math. J. 21 (1954), 
369--376.

\bibitem{hoffhum} P.~N.~Hoffman and J.~F.~Humphreys, Projective representations of the symmetric 
	groups. $Q$-functions and shifted tableaux. Oxford Mathematical Monographs, Oxford Science 
	Publications, The Clarendon Press, Oxford University Press, New York, 1992.

\bibitem{lang} S.~Lang, Algebra. Revised Third Eddition, Jonh Wiley and Sons, Inc., New York, 2002. 

\bibitem{linckel1} M.~Linckelmann, The source algebras of blocks with a Klein four defect group, J.
Algebra 167 (1994), 821--854.
       
\bibitem{maz1} B.~Mazur, Deforming Galois representations. In: Galois groups over $\mathbb{Q}$ (Berkeley, CA, 1987), Springer-Verlag, Berlin-Heidelberg-New York, 1989, pp. 385--437.
        
\end{thebibliography}
\end{document}